\renewcommand\section{\@startsection{section}{1}{\z@}%
                                  {-3.5ex \@plus -1ex \@minus -.2ex}%
                                  {2.3ex \@plus.2ex}%
                                  {\normalfont\Large\bfseries}}
\newtheoremstyle{examplestyle}% name of the style to be used
  {4mm}% measure of space to leave above the theorem. E.g.: 3pt
  {4mm}% measure of space to leave below the theorem. E.g.: 3pt
  {\slshape}% name of font to use in the body of the theorem
  {0pt}% measure of space to indent
  {\bfseries}% name of head font
  {\newline}% punctuation between head and body
  {0mm}% space after theorem head
  {}% Manually specify head
\theoremstyle{examplestyle}
  \newtheorem{Theorem}{Theorem}[section]              %   moet zijn: \newtheorem{Theorem}{Theorem}[section]  als ook het sectienummer in de Theorem moet staan, e.g. Theorem 2.3 ipv Theorem 3.
  \newtheorem{Lemma}[Theorem]{Lemma}
 \newtheorem*{Defs*}{Definitions}
\newtheorem{Conjecture}[Theorem]{Conjecture}
\newtheorem{Claim}[Theorem]{Claim}
\title{The Bollob\'as--Eldridge--Catlin conjecture for even girth at least $10$}
\author{Wouter Cames van Batenburg\thanks{Department of Mathematics, Radboud University Nijmegen, Postbus 9010, 6500 GL Nijmegen, The Netherlands. \href{mailto:w.camesvanbatenburg@math.ru.nl}{\nolinkurl{w.camesvanbatenburg@math.ru.nl}}, \href{mailto:ross.kang@gmail.com}{\nolinkurl{ross.kang@gmail.com}}.} \and Ross J. Kang\footnotemark[1]}
\begin{document}

\maketitle

\begin{abstract}
Two graphs $G_1$ and $G_2$ on $n$ vertices are said to \textit{pack} if there exist injective mappings of their vertex sets into $[n]$ such that the images of their edge sets are disjoint. A longstanding conjecture due to Bollob\'as and Eldridge and, independently, Catlin, asserts that, if $(\Delta(G_1)+1) (\Delta(G_2)+1) \le n+1$, then $G_1$ and $G_2$ pack. We consider the validity of this assertion under the additional assumptions that neither $G_1$ nor $G_2$ contain a $4$-, $6$- or $8$-cycle, and that $\Delta(G_1)$ or $\Delta(G_2)$ is large enough ($\ge 940060$).
%MSC: 05C70 (05C35)
%Keywords: graph packing, Bollobas-Eldridge-Catlin conjecture, equitable colouring, even girth
\end{abstract}

\section{Introduction}

Two (simple) graphs $G_1$ and $G_2$ on $n$ vertices are said to {\em pack} if there exist injective mappings of their vertex sets into $[n] = \{1,\dots,n\}$ so that their edge sets have disjoint images. Equivalently, they pack if $G_1$ is a subgraph of the complement of $G_2$.

We let $\Delta_1$ and $\Delta_2$ denote the maximum degrees of $G_1$ and $G_2$, respectively.
The following, which posits a natural sufficient condition for $G_1$ and $G_2$ to pack in terms of $\Delta_1$ and $\Delta_2$, is a central combinatorial problem posed in the 1970s~\cite{BoEl78,Cat74,Cat76,SaSp78}.

\begin{Conjecture}[Bollob\'as and Eldridge~\cite{BoEl78} and Catlin~\cite{Cat76}]
If $G_1$ and $G_2$ are graphs on $n$ vertices with respective maximum degrees $\Delta_1$ and $\Delta_2$ such that $(\Delta_1 + 1) (\Delta_2 +1) \le n+1$, then they pack. 
\end{Conjecture}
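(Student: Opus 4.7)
This conjecture remains open in full generality, so any proposal is necessarily ambitious; the natural strategy is a hybrid of structural reduction and a probabilistic extension. A uniformly random bijection $\sigma:V(G_1)\to V(G_2)$ has expected number of edge-conflicts of order $2e(G_1)e(G_2)/\binom{n}{2}$, which under the hypothesis need not be $o(1)$, so a first-moment argument alone cannot succeed; one must exploit the structure of the high-degree vertices before randomising.

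First I would split each $V(G_i)$ into a ``heavy'' set $H_i$ of vertices of degree close to $\Delta_i$ and a ``light'' complement $L_i$. The hypothesis $(\Delta_1+1)(\Delta_2+1)\le n+1$ forces $|H_i|$ to be small relative to $n$ (as $|H_i|$ is essentially bounded by $2e(G_i)/\Delta_i$), so a careful greedy or auxiliary-matching argument should produce a partial packing $\sigma_0$ that places $H_1$ into regions of $G_2$ which are locally sparse, and symmetrically for $H_2$; this is the deterministic core of the argument, analogous in spirit to the Kaul--Kostochka--Yu reduction when one of the $\Delta_i$ is large. I would then extend $\sigma_0$ to a full bijection by a uniformly random choice on the light vertices; since every vertex of $L_i$ has degree bounded away from $\Delta_i$, this yields the multiplicative slack needed to apply the Lov\'asz Local Lemma, or alternatively an entropy-compression argument in the style of Rosenfeld, to conclude that with positive probability no conflict arises within $L_1\cup L_2$. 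Residual conflicts with the heavy part would be eliminated by a switching argument---swap $\sigma(x)$ and $\sigma(y)$ whenever this strictly decreases a suitable conflict-potential---whose termination and conflict-freeness would constitute the main technical lemmas.

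The decisive obstacle is the sharpness of the hypothesis: the bound $(\Delta_1+1)(\Delta_2+1)\le n+1$ is essentially tight via disjoint-union-of-cliques constructions, so every step above must be executed without sacrificing constant factors, which rules out any argument that loses a $1+\varepsilon$ slack factor. The extremal difficulty arises precisely when both graphs contain many short even cycles clustered around high-degree vertices---exactly the configuration in which the dependencies among potential edge-conflicts become too correlated for the local lemma to absorb them, and in which the switching step can cascade new conflicts back into the heavy part. This extremal obstruction is what the present paper circumvents by forbidding $4$-, $6$- and $8$-cycles in $G_1$ and $G_2$ (and by then further assuming one of the maximum degrees is large), and it is the principal reason the full conjecture has resisted resolution for nearly five decades.
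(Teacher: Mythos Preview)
The statement you have been asked to prove is the full Bollob\'as--Eldridge--Catlin conjecture, and the paper does \emph{not} prove it: it is stated there as a Conjecture, not a theorem, and the paper's contribution is the restricted Theorem~\ref{th:newtheorem}, which adds the hypotheses that neither graph contains a $C_4$, $C_6$ or $C_8$ and that one of the maximum degrees is large. So there is no ``paper's own proof'' to compare against.

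Your proposal is, as you yourself say, not a proof but an outline of a possible strategy, and the gaps you identify are genuine and fatal at the level of a rigorous argument. In particular: the Lov\'asz Local Lemma (or entropy compression) step cannot be made to work at the sharp threshold $(\Delta_1+1)(\Delta_2+1)\le n+1$, because the dependency degree among conflict events is of the same order as the reciprocal of their probability, with no multiplicative slack; your heavy/light decomposition does not create such slack, since a vertex can have degree arbitrarily close to $\Delta_i$ without being ``heavy'' in any useful sense, and the bound $|H_i|\lesssim 2e(G_i)/\Delta_i$ gives no control when $G_i$ is nearly regular. The switching step is likewise only a heuristic: you have not specified a potential function that is both bounded and strictly decreasing under swaps, and indeed the known partial results (Kaul--Kostochka--Yu, Eaton) show that controlling swap cascades is exactly where the difficulty lies. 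Your final paragraph correctly diagnoses the obstruction, but that diagnosis is an explanation of why the conjecture is hard, not a proof of it.
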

If true, the statement would be sharp and would significantly generalise a celebrated result of Hajnal and Szemer\'edi~\cite{HaSz70}.
Sauer and Spencer~\cite{SaSp78} showed that $2 \Delta_1 \Delta_2 < n$ is a sufficient condition for $G_1$ and $G_2$ to pack, which is seen to be sharp when one of the graphs is a perfect matching.
Thus far the Bollob\'as--Eldridge--Catlin (BEC) conjecture has been confirmed in the following special cases: $\Delta_1 = 2$~\cite{AiBr93}; $\Delta_1 = 3$ and $n$ sufficiently large~\cite{CSS03}; $G_1$ bipartite and $n$ sufficiently large~\cite{Csa07}; and $G_1$ $d$-degenerate, $\Delta_1 \ge 40 d$ and $\Delta_2 \ge 215$~\cite{BKN08}.
In previous work~\cite{CK16}, we confirmed the BEC conjecture for maximum codegree of $G_1$ less than $t$ and $\Delta_1>17t\Delta_2$.

We would also like to highlight the following three results that can be considered approximate forms of the BEC conjecture.
(a) The BEC condition is sufficient for $G_1$ and $G_2$ to admit a `near packing' in that the subgraph induced by the intersection of their images has maximum degree at most $1$~\cite{Eaton00}.
(b) The condition $(\Delta_1 + 1) (\Delta_2 +1) \le 3n/5+1$ is sufficient for $G_1$ and $G_2$ to pack, provided that $\Delta_1,\Delta_2\ge 300$~\cite{KKY08}.
(c) If $G_2$ is chosen as a binomial random graph of parameters $n$ and $p$ such that $np$ in place of $\Delta_2$ satisfies the BEC condition, then $G_1$ and $G_2$ pack with probability tending to $1$ as $n\to\infty$~\cite{BJS14+}.

%In this paper we prove the following theorem, which shows that the BEC conjecture holds for every pair of graphs that don't contain $4$-, $6$- and $8$-cycles as a subgraph, provided $\Delta_1,\Delta_2$ are sufficiently large.

In this paper, we confirm the BEC conjecture for every pair of graphs neither of which contains a $4$-, $6$- or $8$-cycle as a subgraph --- i.e.~both of which have even girth at least $10$ --- provided at least one of the graphs has large enough maximum degree.

For the rest of the paper we always assume without loss of generality that $\Delta_1 \ge \Delta_2$.

\begin{Theorem}\label{th:newtheorem}
If $G_1$ and $G_2$ are graphs on $n$ vertices with respective maximum degrees $\Delta_1$ and $\Delta_2$ such that $(\Delta_1 + 1) (\Delta_2 +1) \le n+1$, then they pack provided neither contains a $4$-, $6$- or $8$-cycle and either $\Delta_1 \ge 940060$ or $\Delta_1\ge \Delta_2 \ge 27620$. 
%Let $G_1,G_2$ be graphs without $4$-, $6$- and $8$-cycles, such that $(\Delta_1+1)(\Delta_2+1) \le n+1$, and ($\Delta_1 \ge \Delta_2 \ge 158009$ or $\Delta_1 \ge 5372061$).
%Then $G_1$ and $G_2$ pack.
\end{Theorem}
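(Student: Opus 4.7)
The plan is to extend the probabilistic framework of our previous paper \cite{CK16}. There we established the BEC conjecture under a codegree assumption on $G_1$ combined with a degree-dominance hypothesis of the form $\Delta_1>17t\Delta_2$; since every $C_4$-free graph has codegree at most $1$, that result already handles the regime in which $\Delta_1$ is sufficiently larger than $\Delta_2$, which is exactly the role we envisage for the threshold $\Delta_1\ge 940060$. The bulk of the new work therefore addresses the complementary case where $\Delta_1$ and $\Delta_2$ are comparable, corresponding to the threshold $\Delta_1\ge\Delta_2\ge 27620$; here the stronger hypothesis forbidding $C_6$ and $C_8$ becomes essential.

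The central structural observation is that, in either $G_i$, excluding $C_4,C_6,C_8$ is equivalent to the statement that for any two distinct vertices $x,y\in V(G_i)$ and any $k\in\{1,2,3,4\}$ there is at most one $x$-$y$ path of length exactly $k$: two such paths would concatenate into a closed even walk of length at most $8$, which must contain a short even cycle. Thus the ball of radius $4$ around any vertex of $G_1$ or $G_2$ is essentially tree-like, and in particular the number of short walks between any two vertices is tightly controlled by $\Delta_1$ and $\Delta_2$ alone.

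For the comparable-degree regime the algorithmic core proceeds as follows. Take a uniformly random bijection $\pi:V(G_1)\to V(G_2)$, and call an edge $uv\in E(G_1)$ \emph{bad} if $\pi(u)\pi(v)\in E(G_2)$. For each bad edge $uv$ one seeks a \emph{swap}, i.e.\ a pair of vertices $a,b\in V(G_1)$ whose images under $\pi$ are to be exchanged, that resolves $uv$ while creating at most a controlled number of new bad edges. There are $n-1$ swap partners available, the expected number of new bad edges created per random swap is of order $\Delta_1\Delta_2/n\le 1$, and the no-$C_6$, no-$C_8$ hypotheses ensure that within a bounded-radius neighbourhood of any bad edge the set of potentially-nearby bad edges is sparse. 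A first-moment / union-bound computation then produces a safe swap, and iterating until no bad edge remains yields the packing.

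The main obstacle is precisely the cascade control underlying the previous paragraph: resolving one bad edge must not generate a dense cluster of new bad edges that in turn require resolution near the same vertices. The absence of $C_6$ and $C_8$ is what prevents such resonant cascades, by forcing swap interactions to be locally tree-like up to distance $4$; this is the qualitative jump beyond the $C_4$-free (codegree-$1$) setting of \cite{CK16}. The specific constants $940060$ and $27620$ arise from balancing the various terms in the counting and concentration estimates (and the quantitative invocation of \cite{CK16}) against each other, and are not claimed to be tight.
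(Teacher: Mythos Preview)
Your proposal is not a proof but a loose plan, and it diverges sharply from the paper's argument in both method and rigor. Two concrete problems stand out.

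First, your central structural claim is false. You assert that forbidding $C_4,C_6,C_8$ forces, for every pair $x,y$ and every $k\le 4$, at most one $x$--$y$ path of length $k$, because two such paths would yield a closed even walk containing a short even cycle. But a closed even walk need not contain any even cycle. Take the graph on $\{x,a,b,c,y\}$ with edge set $\{xa,ab,by,xc,ca,ay\}$: it contains the two triangles $xac$ and $aby$ and no even cycle whatsoever, yet $xaby$ and $xcay$ are two distinct $x$--$y$ paths of length $3$. So the ``tree-like to radius $4$'' picture you rely on does not follow from the hypotheses, and with it the cascade-control heuristic loses its stated justification.

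Second, the algorithmic part is hand-waving where the real difficulty lies. A random bijection has on the order of $e(G_1)\Delta_2/n$ bad edges in expectation, which can be linear in $n$; a single swap can create new bad edges; and ``iterating until no bad edge remains'' requires a potential function and a quantitative guarantee that a good swap exists at every step. None of this is supplied, and the vague appeal to a first-moment bound does not substitute for it. You also describe \cite{CK16} as a ``probabilistic framework'', but the relevant input from that paper (and the engine of the present one) is Corr\'adi's extremal lemma, not randomness.

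The paper's actual proof is entirely deterministic and structural. One assumes a counterexample, uses Eaton's near-packing result to pass to a labelling whose purple edges form a matching with the minimum possible number of edges, fixes a purple edge $uv$, and studies the mixed second neighbourhoods $N_1(N_2(\cdot))$ and $N_2(N_1(\cdot))$ around $u$ and $v$. The $C_4,C_6,C_8$-freeness is used in a finite case analysis (Claim~\ref{le:postcorradi}) to bound pairwise overlaps of certain auxiliary sets, which then feeds into Corr\'adi's lemma to bound $|N_1(N_2(a))\cap N_1(N_2(b))|$ and its symmetric analogue. Combining these bounds with the swap obstructions (Claims~\ref{clm:links1} and~\ref{clm:links2}) yields an upper bound on $n$ that contradicts the BEC inequality once $\Delta_1$ (or $\Delta_2$) is large enough; Theorem~\ref{th:oldtheorem} handles the regime $\Delta_1>34\Delta_2$. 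If you want to salvage your outline, you would at minimum need a correct replacement for the path-uniqueness claim and a genuine termination argument for the swap process; as written, neither is present.
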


An important ingredient of the proof is a special case ($t=2$) of our previous result in~\cite{CK16}.

\begin{Theorem}[Corollary~1.5 in~\cite{CK16}]\label{th:oldtheorem}
If $G_1$ and $G_2$ are graphs on $n$ vertices with respective maximum degrees $\Delta_1$ and $\Delta_2$ such that $(\Delta_1+1)(\Delta_2+1) \le n+1$, then they pack provided $G_1$ contains no $4$-cycle and $\Delta_1 > 34 \Delta_2$.
\end{Theorem}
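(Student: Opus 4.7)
My plan is first to reduce to the \emph{balanced} range $\Delta_2 \le \Delta_1 \le 34\Delta_2$: the complementary case $\Delta_1 > 34\Delta_2$ is already settled by Theorem~\ref{th:oldtheorem} (since $G_1$ is in particular $C_4$-free). Together with the BEC bound $(\Delta_1+1)(\Delta_2+1)\le n+1$, the balanced case forces both $\Delta_1$ and $\Delta_2$ to be of order $\Theta(\sqrt n)$; the lower bound $\Delta_2\ge 27620$, multiplied by the ratio bound $34$, accounts for the companion bound $\Delta_1\ge 940060$ that is permitted in the unbalanced regime.

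In the balanced range I would work with a uniformly random bijection $\pi\colon V(G_1)\to V(G_2)$ and call an edge $uv\in E(G_1)$ a \emph{conflict} if $\pi(u)\pi(v)\in E(G_2)$. Let $X$ be the total number of conflicts. The hypothesis that neither $G_i$ contains $C_4$, $C_6$ or $C_8$ gives two quantitative advantages. First, by the Bondy--Simonovits bound $|E(G_i)|=O(n^{5/4})$, so $\mathbb E[X]=O(n^{1/2})$. Second, in each $G_i$ any two vertices share at most one common neighbour (no $C_4$), and the numbers of walks of lengths $3$ and $5$ between prescribed endpoints are tightly bounded (from no $C_6$, $C_8$). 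In short, the local bipartite-girth around every vertex is $\ge 10$, which is what controls both the higher moments of $X$ and the joint probabilities of nearby bad events in what follows.

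I would then carry out three steps: (i) extremal counting bounds on short walks in $G_1$ and $G_2$; (ii) a second-moment or concentration step showing that $X$ attains its expected order with high probability and that conflicts are locally sparse under $\pi$; and (iii) a \emph{swap lemma} asserting that for every residual conflict $uv$ there exist $\Omega(n)$ choices of a partner $w$ for which exchanging $\pi(u)$ and $\pi(w)$ destroys the conflict at $uv$ without creating new ones, where $w$ lies outside a small neighbourhood of $u$ in $G_1$ and $\pi(w)$ outside a small neighbourhood of $\pi(u)$ in $G_2$. Iterating (iii) greedily eliminates all conflicts and produces a valid packing. The main obstacle is the swap lemma: a single swap touches all $\Delta_1$ $G_1$-neighbours of $u$ and all $\Delta_2$ $G_2$-neighbours of $\pi(u)$, and one must rule out these two neighbourhoods conspiring — via short even cycles on either side — to create new conflicts at a rate matching their destruction. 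It is precisely the even-girth-$\ge 10$ hypothesis on \emph{both} graphs that rules this out, and the explicit thresholds $27620$ and $940060$ reflect the slack needed in the ensuing union-bound and local-lemma bookkeeping.
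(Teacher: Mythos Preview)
Your proposal does not address the stated theorem. The statement you are asked to prove is Theorem~\ref{th:oldtheorem} itself (the $C_4$-free, $\Delta_1>34\Delta_2$ result quoted from~\cite{CK16}), yet your very first step is to \emph{invoke} Theorem~\ref{th:oldtheorem} in order to dispose of the range $\Delta_1>34\Delta_2$ and restrict to the balanced regime. That is circular. What you have actually sketched is a strategy for Theorem~\ref{th:newtheorem}, the main theorem of this paper, not for Theorem~\ref{th:oldtheorem}. Note also that the paper gives no proof of Theorem~\ref{th:oldtheorem}; it is cited as a black box from prior work, so there is no ``paper's own proof'' to compare against here.

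Even read as a proposal for Theorem~\ref{th:newtheorem}, your approach is quite different from the paper's and has a genuine gap. The paper is entirely deterministic: it takes a purple-edge-minimal pair of labellings (using Eaton's near-packing, Lemma~\ref{cor:eaton}), fixes a purple edge $uv$, and uses swap arguments (Claims~\ref{clm:links1} and~\ref{clm:links2}) to force structural constraints; the engine is Claim~\ref{le:postcorradi}, which bounds $|N_1(N_2(a))\cap N_1(N_2(b))|$ via Corr\'adi's lemma together with a case analysis showing that any excess overlap forces a blue $C_4$, $C_6$ or $C_8$. Your route is probabilistic (random bijection, second moments, union bounds), and the decisive step --- the swap lemma asserting $\Omega(n)$ safe swap partners for each conflict --- is asserted rather than argued. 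In particular, a swap at $u$ can create new conflicts at any of the $\Delta_1$ blue neighbours of $u$ and the $\Delta_2$ red neighbours of $\pi(u)$, which is $\Theta(\sqrt n)$ many places; ruling out that these swamp the single conflict removed requires exactly the kind of precise second-neighbourhood intersection bound that the paper obtains in Claim~\ref{le:postcorradi}, and your moment and Bondy--Simonovits estimates do not by themselves deliver it. Nor does greedy iteration of swaps obviously terminate, since each swap may introduce new conflicts at the same rate it removes them.
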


Thus, for Theorem~\ref{th:newtheorem}, we may restrict our attention to the case where $\Delta_1$ and $\Delta_2$ are relatively close to each other, i.e. $\Delta_2 \le \Delta_1 \le 34 \Delta_2$.
Central to our earlier work~\cite{CK16} was a lemma of Corr\'adi~\cite{Cor69}; the same is true here, but the application is more involved as we shall see in Section~\ref{sec:engine}.
We also use the `near packing' result~\cite{Eaton00}. %and the confirmed special case $\Delta_2 = 2$~\cite{AiBr93}

We have made little effort to optimise the boundary constants $940060$ and $27620$. These constants partly depend on the constant $34$ in Theorem~\ref{th:oldtheorem}.

The structure of the paper is as follows. In the next section, we introduce notation and describe some prerequisite results. In Section~\ref{sec:critical}, we give some basic properties of a hypothetical critical counterexample to Theorem~\ref{th:newtheorem}. We prove the main technical bound in Section~\ref{sec:engine} and then wrap up the proof of Theorem~\ref{th:newtheorem} in Section~\ref{sec:proof}.

\section{Notation and preliminaries}\label{sec:preliminaries}

Here we introduce some terminology that we use throughout.
We often call $G_1$ the {\em blue} graph and $G_2$ the {\em red} graph.
We treat the injective vertex mappings as labellings of the vertices from $1$ to $n$.
However, rather than saying ``the vertex in $G_1$ (or $G_2$) corresponding to the label $i$'', we often only say ``vertex $i$'', since this should never cause any confusion.
Our proofs rely on accurately specifying the neighbourhood structure as viewed from a particular vertex. Let $i\in [n]$.
The {\em blue neighbourhood $N_1(i)$ of $i$} is the set $\{j \,\mid\, ij \in E(G_1)\}$ and the {\em blue degree $\deg_1(i)$ of $i$} is $|N_1(i)|$.
The {\em red neighbourhood $N_2(i)$} and {\em red degree $N_2(i)$} are defined analogously.
For $j\in[n]$, a {\em red--blue-link (or $2$--$1$-link) from $i$ to $j$} is a vertex $i'$ such that $ii' \in E(G_2)$ and $i'j \in E(G_1)$.
The {\em red--blue-neighbourhood $N_1(N_2(i))$ of $i$} is the set $\{j \,\mid\, \exists\text{ red--blue-link from $i$ to $j$}\}$.
A {\em blue--red-link (or $1$--$2$-link)} and the {\em blue--red-neighbourhood} $N_2(N_1(i))$ are defined analogously.

In search of a certificate that $G_1$ and $G_2$ pack, without loss of generality, we keep the vertex labelling of the blue graph $G_1$ fixed, and permute only the labels in the red graph $G_2$. This can be thought of as ``moving'' the red graph above a fixed ground set $[n]$.
In particular, we seek to avoid the situation that there are $i,j \in [n]$ for which $ij$ is an edge in both $G_1$ and $G_2$ --- in this situation, we call $ij$ a {\em purple} edge induced by the labellings of $G_1$ and $G_2$.
So $G_1$ and $G_2$ pack if and only if they admit a pair of vertex labellings that induces no purple edge.
In our search, we make small cyclic sub-permutations of the labels (of $G_2$), which are referred to as follows.
For $i_0,\dots,i_{\ell-1} \in [n]$, a {\em $(i_0,\dots,i_{\ell-1})$-swap} is a relabelling of $G_2$ so that for each $k \in \{0,\dots,\ell-1\}$ the vertex labelled $i_k$ is re-assigned the label $i_{k+1\bmod \ell}$. In fact, we shall only require swaps having $\ell \in \{1,2\}$.
The following observation describes when a swap could be helpful in the search for a packing certificate.
This is identical to Lemma~1 in~\cite{KKY08}.

\begin{Lemma}\label{lem:swap}
Fix a pair of labellings of $G_1$ and $G_2$ from $[n]$ and let $u_0,\dots,u_{\ell-1} \in [n]$. For every $k, k' \in \{0,\dots,\ell-1\}$, suppose that there is no red--blue-link from $u_k$ to $u_{k+1\bmod \ell}$ and that, if $u_k u_{k'} \in E(G_2)$, then $u_{(k+1 \bmod \ell)} u_{(k'+1 \bmod \ell)} \notin E(G_1)$. Then there is no purple edge incident to any of $u_0,\dots,u_{\ell-1}$ after a $(u_0,\dots,u_{\ell-1})$-swap. \qed
\end{Lemma}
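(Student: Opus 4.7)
The plan is to suppose for contradiction that after the $(u_0,\dots,u_{\ell-1})$-swap there is a purple edge $e$ incident to some $u_k$, and then derive a contradiction directly from the hypotheses by case analysis on the other endpoint of $e$. The underlying observation is that the labels of $G_1$ are fixed, so a purple edge is determined by the $G_1$-labelling plus whatever the swap does to the $G_2$-labelling; concretely, after the swap the vertex of $G_2$ now carrying the label $u_k$ is the vertex that previously carried the label $u_{k-1\bmod \ell}$, while vertices outside $\{u_0,\dots,u_{\ell-1}\}$ retain their old labels.

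The two cases are as follows. First, if $e = u_k v$ with $v \notin \{u_0,\dots,u_{\ell-1}\}$, then $u_k v \in E(G_1)$ by the $G_1$ half of the purple condition, and the $G_2$ half, translated back along the swap, says exactly that there is a $G_2$-edge between the old label $u_{k-1\bmod \ell}$ and $v$. Hence $v$ is a red--blue-link from $u_{k-1\bmod \ell}$ to $u_k$, contradicting the hypothesis (taking the index $k-1\bmod\ell$ in the role of $k$). Second, if $e = u_k u_{k'}$ with both endpoints among the swapped labels, then $u_k u_{k'}\in E(G_1)$, and un-doing the swap as above shows that the old $G_2$-labels $u_{k-1\bmod\ell}$ and $u_{k'-1\bmod \ell}$ were adjacent in $G_2$ before the swap. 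Setting $a := k-1\bmod\ell$ and $b := k'-1\bmod\ell$, we have $u_a u_b \in E(G_2)$ while $u_{a+1\bmod\ell}u_{b+1\bmod\ell}\in E(G_1)$, directly contradicting the second hypothesis of the lemma.

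This exhausts the possibilities for the other endpoint of $e$, so no such purple edge can exist after the swap. The only real work is bookkeeping: writing down precisely what the swap does to $G_2$-labels and being careful about the $-1\bmod\ell$ shift when translating a post-swap adjacency back to a pre-swap adjacency. I do not anticipate any genuine obstacle; the lemma is essentially a definitional unpacking of what a cyclic relabelling of $G_2$ does to incidences, and the hypotheses have been tailored to cover exactly the two ways a purple edge at a swapped vertex could arise.
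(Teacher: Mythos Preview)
Your argument is correct. Note that the paper itself does not supply a proof of this lemma: it is stated with a bare \qed and attributed to Lemma~1 of~\cite{KKY08}. Your case analysis---splitting on whether the other endpoint of a putative post-swap purple edge lies outside or inside $\{u_0,\dots,u_{\ell-1}\}$, and in each case translating the new $G_2$-adjacency back through the cyclic shift---is exactly the natural verification, and the bookkeeping with the $-1\bmod\ell$ shift is handled correctly.
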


We have already mentioned a `near packing' result of Eaton~\cite{Eaton00} which states that two graphs satisfying the BEC condition admit a pair of labellings such that the purple graph has maximum degree at most $1$.
%
%\begin{Lemma}\label{lem:eaton}
%Let $G_1$ and $G_2$ be graphs such that $(\Delta_1+1)(\Delta_2+1) \le n+1$, then  there exists a pair of labellings of $G_1$ and $G_2$ such that the graph induced by the purple edges has degree at most $1$.
%\end{Lemma}
Eaton in fact proved that for two such graphs, if there is a pair of labellings for which the purple graph has some vertex of degree larger than $1$, then there exist $i,j\in[n]$ such that an $(i,j)$-swap yields a pair of labellings with fewer purple edges.
%
%In fact, in its three page proof, the following slightly stronger result was shown.
%\begin{Lemma}\label{lem:eaton}
%Let $G_1$ and $G_2$ be graphs such that $(\Delta_1+1)(\Delta_2+1) \le n+1$. Consider a labelling of $G_1$ and $G_2$. If the graph induced by the purple edges has degree larger than $1$, then there exist vertices $i$ and $j$ for which an $(i,j)$-swap yields a pair of labellings with fewer purple edges.
%\end{Lemma}
%In our proof we will use the following corollary of Lemma~\ref{lem:eaton}.
The following version of Eaton's result will be of use to us.
\begin{Lemma}[Eaton~\cite{Eaton00}]\label{cor:eaton}
If $G_1$ and $G_2$ satisfy the BEC condition, then for any pair of labellings of $G_1$ and $G_2$ with fewest purple edges the graph induced by the purple edges has maximum degree at most $1$.
%Let $G_1$ and $G_2$ be graphs such that $(\Delta_1+1)(\Delta_2+1) \le n+1$. The labelling of $G_1$ and $G_2$ that minimizes the number of purple edges has degree at most $1$.
\end{Lemma}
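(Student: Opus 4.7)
My plan is to deduce the lemma essentially for free from the stronger version of Eaton's result recalled in the paragraph preceding the statement: whenever a pair of labellings induces a purple graph with a vertex of degree at least $2$, there exist $i, j \in [n]$ such that the $(i,j)$-swap strictly decreases the number of purple edges. Taking the contrapositive, any pair of labellings achieving the minimum possible number of purple edges must have purple-graph maximum degree at most $1$, since otherwise Eaton's swap would contradict minimality. That is essentially the whole argument.

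For a self-contained alternative that avoids re-citing Eaton, I would argue directly by averaging. Assuming for contradiction that a min-purple labelling has some vertex $u$ with $\deg_P(u) \ge 2$, for each $w \in [n] \setminus \{u\}$ I would compute the effect on the purple count of an $(u,w)$-swap. Tracking which edges at the labels $u$ and $w$ switch in or out of being purple, the change equals $-\sum_{x \ne u, w} b_x r_x$, where $b_x := \mathbf{1}[ux \in E(G_1)] - \mathbf{1}[wx \in E(G_1)]$ and $r_x := \mathbf{1}[ux \in E(G_2)] - \mathbf{1}[wx \in E(G_2)]$.

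The key step is then to sum the above over all $w \ne u$ and show the total is strictly positive, which forces some single $w$ to yield a strict decrease. Expanding and collecting the four resulting bilinear terms, the total works out to
\[
\sum_{w \ne u} \sum_x b_x r_x \;=\; (n-2)\deg_P(u) + 2|E(P)| - \sum_{x \in N_1(u)} \deg_2(x) - \sum_{x \in N_2(u)} \deg_1(x),
\]
where $E(P)$ denotes the set of purple edges. Bounding each of the last two sums above by $\Delta_1 \Delta_2$ and using $\deg_P(u) \ge 2$ together with $|E(P)| \ge 2$, the right-hand side is at least $2n - 2\Delta_1 \Delta_2$; the BEC condition $(\Delta_1 + 1)(\Delta_2 + 1) \le n+1$ then rewrites this as at least $2(\Delta_1 + \Delta_2) > 0$, where positivity uses that $G_1$ and $G_2$ must both contain at least one edge for any purple edge to exist. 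Hence some $w$ yields a strict decrease, contradicting minimality.

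The only slightly delicate step is this final arithmetic check, which reveals that BEC is essentially tight for the averaging; every other step is routine bookkeeping once one fixes the convention that $V(G_2)$ is identified with $[n]$ via the current labelling.
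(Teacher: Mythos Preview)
Your first paragraph is exactly the paper's approach: the paper does not prove this lemma at all but simply cites Eaton, having just recalled in the preceding paragraph the stronger swap statement from which the lemma follows immediately by contrapositive.

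Your self-contained averaging argument is correct and is essentially a reconstruction of Eaton's original proof, which the paper omits. The bookkeeping checks out: the change in the purple count under a $(u,w)$-swap is indeed $-\sum_{x\ne u,w} b_x r_x$, and summing over $w\ne u$ the four cross-terms collapse to $(n-2)\deg_P(u)+2|E(P)|-\sum_{x\in N_1(u)}\deg_2(x)-\sum_{x\in N_2(u)}\deg_1(x)$ exactly as you wrote (the two $\deg_P(u)$ corrections from Terms~2 and~3 cancel the $-2\deg_P(u)$ from Term~4). The BEC condition then gives $2n-2\Delta_1\Delta_2\ge 2(\Delta_1+\Delta_2)>0$, forcing some swap to strictly decrease the purple count. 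This buys you independence from the cited reference at the cost of a short computation; the paper prefers the one-line deduction since Eaton's result is already needed conceptually in Section~3 to justify the critical-counterexample setup.
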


We make use of the following corollary to a lemma of Corr\'adi~\cite{Cor69}.
\begin{Lemma}[Corr\'adi~\cite{Cor69}]\label{CorCor}
Let $A_1,\ldots, A_N$ be subsets of a finite set $X$ all of cardinality at least $k$. If there is some integer $t$ such that $k^2 > (t-1) |X|$ and $|A_i \cap A_j| \le t-1$ for all $i \neq j$, then
\begin{align*}
N \le  |X| \frac{k- (t-1)}{k^2-(t-1) |X|}.
\end{align*}
\end{Lemma}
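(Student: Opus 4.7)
The plan is a standard double-counting of incidences between $X$ and the family $A_1,\ldots,A_N$, combined with Cauchy--Schwarz and a quadratic estimate. For each $x\in X$, let $m_x$ denote the number of sets $A_i$ that contain $x$. Then $\sum_{x\in X} m_x = \sum_{i=1}^N |A_i| \ge Nk$ and, by counting pairs $(x,\{A_i,A_j\})$ with $x\in A_i\cap A_j$ in two ways,
\[
\sum_{x\in X} \binom{m_x}{2} \;=\; \sum_{i<j} |A_i\cap A_j| \;\le\; \binom{N}{2}(t-1).
\]

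Next I would translate the second identity into an upper bound on $\sum_x m_x^2$, namely
\[
\sum_{x\in X} m_x^2 \;\le\; N(N-1)(t-1) + \sum_{x\in X} m_x,
\]
and then apply Cauchy--Schwarz (or equivalently the power-mean inequality) in the form $\bigl(\sum_{x} m_x\bigr)^2 \le |X|\sum_{x} m_x^2$. Writing $S := \sum_x m_x$, this yields the quadratic inequality
\[
S^2 - |X|\,S - |X|\,N(N-1)(t-1) \;\le\; 0.
\]

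From here the plan is simply to substitute the lower bound $S\ge Nk$ and rearrange. Using $2Nk\ge |X|$ (a condition that I will verify separately; if it fails, the claimed bound on $N$ is trivial because the right-hand side is already larger than $|X|/k\cdot\text{const}$), squaring $2Nk - |X| \le \sqrt{|X|^2 + 4|X|N(N-1)(t-1)}$ leads after cancellation to
\[
N\bigl(k^2 - (t-1)|X|\bigr) \;\le\; |X|\bigl(k - (t-1)\bigr),
\]
which is exactly the claim, since $k^2 > (t-1)|X|$ ensures the left-hand coefficient of $N$ is positive and we may divide. The only place calling for mild care is the rearrangement of the quadratic, i.e.\ making sure that $(N-1)(t-1)$ (not $N(t-1)$) appears so that the tight numerator $k-(t-1)$ comes out rather than the weaker $k$; this is what forces the use of $\binom{N}{2}$ rather than $N^2/2$ in the pair bound. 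Beyond this bookkeeping I expect no genuine obstacle, as the argument is purely algebraic once the two double-counts and Cauchy--Schwarz are in place.
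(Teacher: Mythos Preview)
The paper does not prove this lemma; it is quoted as a known result of Corr\'adi and used as a black box, so there is no in-paper argument to compare against. Your double-counting plus Cauchy--Schwarz argument is the standard proof and is correct.

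The only point you left slightly informal is the side case $2Nk<|X|$. It can be disposed of cleanly: from $k^2>(t-1)|X|$ and $|X|\ge k$ (there is at least one $A_i$) one gets $k>t-1$, and then the inequality $|X|/(2k)\le |X|\,\dfrac{k-(t-1)}{k^2-(t-1)|X|}$ is equivalent to $(k-(t-1))^2+(t-1)(|X|-(t-1))\ge 0$, which is immediate. With that in hand your substitution $S\ge Nk$ into the quadratic is justified in all cases, and the algebra you outline indeed yields $N\bigl(k^2-(t-1)|X|\bigr)\le |X|\bigl(k-(t-1)\bigr)$ as claimed.
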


\section{A hypothetical critical counterexample}\label{sec:critical}

We begin the proof of Theorem~\ref{th:newtheorem} in this section and continue it in the next two sections. Our proof is by contradiction. This section is devoted to describing the basic properties of a hypothetical counterexample, one that is critical in a sense we next make precise.

Suppose Theorem~\ref{th:newtheorem} is false. Then there must exist a counterexample, that is, a pair $(G_1,G_2)$ of non-packable graphs on $n$ vertices that satisfy the conditions of the theorem.

By Lemma~\ref{cor:eaton}, there exists a pair $(L_1,L_2)$ of labellings of $G_1$ and $G_2$ from $[n]$ such that the graph induced by the purple edges has maximum degree $1$ {\em and} has the minimum number of purple edges among all pairs of labellings of $G_1$ and $G_2$. From now on, we consider $(G_1,G_2)$ with labellings $(L_1,L_2)$ and we fix an arbitrary edge $uv$ that is purple under $(L_1,L_2)$. We will further describe the neighbourhood structure as viewed from $u$ (or $v$). Estimation of the sizes of subsets in this neighbourhood structure is our main method for deriving upper bounds on $n$ that in turn yield the desired contradiction.

We would like to point out similarities with the approach in~\cite{KKY08} and~\cite{CK16}, where $G_2$ was chosen to be edge-minimal over all pairs $(G_1,G_2)$ of non-packable graphs satisfying the theorem conditions. This led to a hypothetical counterexample with only one purple edge. In the present setting, this approach is infeasible because one of the conditions of Theorem~\ref{th:newtheorem} (namely, that both $\Delta_1$ and $\Delta_2$ are sufficiently large) is not invariant under edge removal in $G_2$. This is what led us to consider the purple-edge-minimal counterexample as described above, where we fix $G_1$ and $G_2$ and only minimise over their labellings. The clear downside is that potentially we are faced with multiple purple edges rather than just one, but since by Lemma~\ref{cor:eaton} these do not interfere, it turns out that we can obtain essentially the same structural properties we could have had if we instead assumed $G_2$ to be edge-minimal. It is possible that this alternative form of the minimal
counterexample approach is useful for proving other results related to
the BEC conjecture.

Note that the condition ``$\Delta_1$ sufficiently large" \textit{is} invariant under removing edges from $G_2$. So the weaker version of Theorem \ref{th:newtheorem} that doesn't include  the condition $\Delta_1 \geq \Delta_2 \geq 27620$ \textit{can} be proved using a $G_2$-edge-minimal counterexample with a unique purple edge, without making use of Eaton's near-packing result.

In order to describe the neighbourhood structure of $u$ and $v$, we need the definition of the following vertex subsets:
\begin{align*}
A(u)
&:= N_2(N_1(u))\setminus (N_1(u)\cup N_2(u)\cup N_1(N_2(u)) ), \\
B(u)
&:= N_1(N_2(u))\setminus (N_1(u)\cup N_2(u)\cup N_2(N_1(u)) ), \\
A^*(u) 
&:=N_2(N_1(u)) \setminus ( N_2(u)\cup N_1(N_2(u)) ),
%which is equal to the disjoint union of $A(u)$ and $ N_1(u) \cap N_2(N_1(u))  \setminus (N_1(N_2(u)) \cup \left\{ v \right\})$.
 \text{ and}\\
B^*(u)
&:= N_1(N_2(u)) \setminus ( N_1(u)\cup N_2(N_1(u)) ).
\end{align*}
These sets are analogously defined for $v$ also, and indeed for any element of $[n]$.

One justification for specifying the above subsets is that the following two claims hold. (These are analogues of Claims~1 and~2 in~\cite{KKY08}.)

\begin{Claim}\label{clm:links1}
For all $w\in [n]\setminus\{v\}$, there is a red--blue-link or a blue--red-link from $u$ to $w$.

For all $w\in [n]\setminus\{u\}$, there is a red--blue-link or a blue--red-link from $v$ to $w$.
\end{Claim}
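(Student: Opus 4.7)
The plan is to argue by contradiction, using the purple-edge-minimality of $(L_1, L_2)$ via a single swap. By the symmetry between $u$ and $v$ (both are endpoints of the purple edge $uv$), it suffices to establish the first statement, concerning $u$.

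Suppose there exists $w \in [n] \setminus \{v\}$ admitting neither a red--blue-link nor a blue--red-link from $u$. The case $w = u$ is immediate, since $uv$ being purple means $uv \in E(G_2)$ and $vu \in E(G_1)$, so $v$ is a red--blue-link from $u$ to $u$; hence assume $w \notin \{u, v\}$. I would then apply Lemma~\ref{lem:swap} with $\ell = 2$ and $(u_0, u_1) = (u, w)$ to execute a $(u, w)$-swap.

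To check the hypotheses of Lemma~\ref{lem:swap}: a red--blue-link from $w$ to $u$ is precisely a blue--red-link from $u$ to $w$, so the no-link hypotheses hold in both directions. The remaining condition is that $uw \in E(G_2)$ implies $uw \notin E(G_1)$, i.e.\ that $uw$ is not a purple edge. Here Lemma~\ref{cor:eaton} is essential: the purple graph has maximum degree at most $1$, and since $u$ already has the purple neighbour $v$, the edge $uw$ cannot be purple for any $w \neq v$.

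Lemma~\ref{lem:swap} then guarantees that after the $(u, w)$-swap no purple edge is incident to $u$ or $w$, while any purple edge avoiding both $u$ and $w$ is unaffected (the red adjacencies between vertices outside $\{u,w\}$ remain unchanged). Since the original labelling contained the purple edge $uv$ incident to $u$, the resulting labelling has strictly fewer purple edges, contradicting the minimality of $(L_1,L_2)$. There is no real obstacle in this proof; the only subtlety is recognising that Lemma~\ref{cor:eaton}, rather than any ad-hoc minimality of $G_2$, is what legalises the swap by ruling out the troublesome case of $uw$ being purple.
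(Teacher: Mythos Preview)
Your proof is correct and follows the same swap-based approach as the paper, but with considerably more care: the paper's proof is a two-line appeal to Lemma~\ref{lem:swap}, whereas you explicitly verify its hypotheses, handle the degenerate case $w=u$ separately, and make visible the role of Lemma~\ref{cor:eaton} in ruling out $uw$ being purple. This last point is a genuine observation---the paper's terse argument relies on it implicitly via the standing assumption that the purple graph has maximum degree~$1$, but does not say so.
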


\begin{proof}
By symmetry, we only need to show the first statement.
If it does not hold, then by Lemma~\ref{lem:swap} a $(u,w)$-swap yields a pair of labellings such that $uv$ is no longer purple and no new purple edges arise. This contradicts the choice of $(L_1,L_2)$.
\end{proof}

\begin{Claim}\label{clm:links2}
For all $a\in A^*(u)$ and $b\in B(u)$, there is a red--blue-link from $a$ to $b$.

For all $b\in B^*(u)$ and $a\in A(u)$, there is a blue--red-link from $b$ to $a$. 
\end{Claim}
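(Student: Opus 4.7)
My plan is to mimic the proof of Claim~\ref{clm:links1}: assume for contradiction that there is no red--blue-link from $a$ to $b$ (equivalently, no blue--red-link from $b$ to $a$, since both mean $N_2(a) \cap N_1(b) = \emptyset$), and use Lemma~\ref{lem:swap} to exhibit a relabelling of $G_2$ with strictly fewer purple edges, contradicting the minimality of $(L_1, L_2)$ guaranteed by Lemma~\ref{cor:eaton}.

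First I would observe that $u, a, b$ are pairwise distinct: $A^*(u) \cap B(u) = \emptyset$ because $A^*(u) \subseteq N_2(N_1(u))$ while $B(u)$ explicitly excludes $N_2(N_1(u))$; and $u$ lies in neither set because the purple edge $uv$ places $u$ in $N_1(N_2(u)) \cap N_2(N_1(u))$. I would then apply Lemma~\ref{lem:swap} with $\ell = 3$ and $(u_0, u_1, u_2) = (u, a, b)$, i.e.\ the cyclic swap $u \to a \to b \to u$ on the $G_2$-labels. The three ``no red--blue-link'' conditions unpack as: (i) no red--blue-link from $u$ to $a$, which holds since $a \in A^*(u)$ implies $a \notin N_1(N_2(u))$; (ii) no red--blue-link from $a$ to $b$, the contradiction hypothesis; and (iii) no red--blue-link from $b$ to $u$, which holds since $b \in B(u)$ implies $b \notin N_2(N_1(u))$, i.e.\ $N_1(u) \cap N_2(b) = \emptyset$. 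The edge-type conditions within $\{u, a, b\}$ are then either vacuous ($a, b \notin N_2(u)$ makes $ua, ub \notin E(G_2)$) or, in the case $ab \in E(G_2)$, reduce to $b \notin N_1(u)$, which is part of the definition of $B(u)$.

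Lemma~\ref{lem:swap} then guarantees that after the $(u, a, b)$-swap no purple edge is incident to $u, a$, or $b$; in particular $uv$ is destroyed, and since labels outside $\{u, a, b\}$ are untouched the purple count strictly decreases, the desired contradiction. The second statement is handled by the same swap and argument: the critical properties $a \notin N_1(N_2(u))$ and $a \notin N_2(u)$ now come from $a \in A(u)$, the properties $b \notin N_2(N_1(u))$ and $b \notin N_1(u)$ come from $b \in B^*(u)$, and the new sub-case where $b \in N_2(u)$ (which $B^*(u)$ does not rule out) is resolved by the extra exclusion $a \notin N_1(u)$ inside $A(u)$, giving $ua \notin E(G_1)$.

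The only substantive work is bookkeeping the Lemma~\ref{lem:swap} hypotheses, each of which unpacks directly from the defining exclusions in $A^*(u), B(u), A(u), B^*(u)$; no girth assumption is needed here. The one mild conceptual hurdle is that the natural swap is a 3-cycle rather than a transposition --- one must rotate three labels simultaneously to destroy $uv$ while avoiding any new purple edges at $a$ or $b$ --- alternatively one could perform the transpositions $(a, b)$ and then $(u, a)$ in sequence, but verifying the swap-lemma hypotheses for the intermediate labelling is strictly more delicate than the 3-cycle formulation.
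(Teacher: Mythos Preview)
Your proof is correct and follows essentially the same approach as the paper: assume no red--blue-link from $a$ to $b$, verify the hypotheses of Lemma~\ref{lem:swap} for the $3$-cycle $(u,a,b)$ directly from the defining exclusions of $A^*(u)$, $B(u)$ (resp.\ $A(u)$, $B^*(u)$), and contradict minimality of the purple-edge count. The paper dispatches the second statement with a one-word appeal to symmetry, whereas you unpack it explicitly and also add the verification that $u,a,b$ are pairwise distinct, but the core argument is identical.
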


\begin{proof}
By symmetry, we only need to show the first statement.
Note that there is at least one purple edge incident to a vertex from $\left\{a,b,u\right\}$, namely $uv$. Since $B(u)\cap N_1(u) = B(u) \cap N_2(u)= \emptyset$ and $A^*(u) \cap N_2(u) = \emptyset$, we have that $bu \notin E(G_1) \cup E(G_2)$ and $ua \notin E(G_2)$. Furthermore, since $A^*(u)\cap N_1(N_2(u))= B(u)\cap N_2(N_1(u))=\emptyset$, there is no red--blue-link from $u$ to $a$ or from $b$ to $u$. Now suppose that there is also no red--blue-link from $a$ to $b$. Then it follows from Lemma~\ref{lem:swap} that there are no purple edges incident to any of $u,a,b$ after a $(u,a,b)$-swap. Since the swap only affects the edges incident to at least one of $\left\{a,b,u\right\}$, this decreases the number of purple edges, contradicting the choice of $(L_1,L_2)$.
\end{proof}

We may assume that $\Delta_1, \Delta_2 \ge 2$ since the BEC conjecture is known for $\Delta_2=1$.
Then the following easy claim shows that neither of $A^*(u)$ and $B^*(u)$ is empty.
\begin{Claim}\label{le:AB1}
$|A^*(u)| \ge \Delta_1-1$ and $|B^*(u)|\ge \Delta_2-1$. And so $|A^*(u)|,|B^*(u)|\ge 1$.
%If either $|A^*(u)| \le \Delta_1-2$ or $|B^*(u)|\le \Delta_2-2$, then $n \le (\Delta_1+1)(\Delta_2+1)-2$.
\end{Claim}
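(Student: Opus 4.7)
The plan is to prove $|A^*(u)|\ge \Delta_1-1$; the corresponding bound for $|B^*(u)|$ then follows by the symmetric argument, exchanging the roles of the blue and red graphs throughout (and using the second statement of Claim~\ref{clm:links1}). My intended approach is to locate a large portion of $A^*(u)$ inside $N_1(u)\setminus\{v\}$, with the forbidden-cycle hypothesis controlling the exclusions.

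Fix any $w\in N_1(u)\setminus\{v\}$. Since the purple graph has maximum degree $1$ by Lemma~\ref{cor:eaton}, $uv$ is the unique purple edge at $u$, so the blue edge $uw$ is not also red; hence $w\notin N_2(u)$. Applying Claim~\ref{clm:links1} (valid because $w\neq v$), there is a red--blue-link or a blue--red-link from $u$ to $w$, so $w\in N_1(N_2(u))\cup N_2(N_1(u))$. Consequently, any such $w$ not lying in $N_1(N_2(u))$ automatically belongs to $N_2(N_1(u))\setminus N_1(N_2(u))$, and combined with $w\notin N_2(u)$ this places $w$ in $A^*(u)$.

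It remains to bound the number of exceptional $w$'s, namely those lying in $N_1(u)\cap N_1(N_2(u))$. If $w\in N_1(u)\cap N_1(y)$ for some $y\in N_2(u)$, then $u$ and $y$ share $w$ as a blue-neighbour; the no-$4$-cycle hypothesis on $G_1$ forces $|N_1(u)\cap N_1(y)|\le 1$ for each such $y$ (a second shared blue neighbour would create a blue $4$-cycle $u,w,y,w',u$), so the total count of exceptional $w$ is at most $|N_2(u)|\le \Delta_2$. This yields the preliminary estimate $|A^*(u)|\ge \deg_1(u)-1-\Delta_2$.

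The main obstacle will be strengthening this estimate to $|A^*(u)|\ge \Delta_1-1$ when $\deg_1(u)<\Delta_1$ or when $\Delta_2$ is comparable to $\Delta_1$. I expect to close the gap either by refining the $\Delta_2$-sized overhead using the no-$6$- and no-$8$-cycle conditions on $G_1$, through Corr\'adi's Lemma~\ref{CorCor} applied to blue-neighbourhood intersections among $\{N_1(y):y\in N_2(u)\}$, or by supplementing the count with further elements of $A^*(u)$ drawn from $N_2(v)$: since $v\in N_1(u)$, we have $N_2(v)\subseteq N_2(N_1(u))$, and the no-$4$-cycle condition on $G_2$ gives $|N_2(v)\cap N_2(u)|\le 1$, while the uniqueness of the purple edge at $v$ prevents $N_2(v)\cap N_1(N_2(u))$ from absorbing too many elements. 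The weaker conclusion $|A^*(u)|,|B^*(u)|\ge 1$ stated at the end then follows immediately from $\Delta_1,\Delta_2\ge 2$.
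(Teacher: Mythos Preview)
Your approach has a genuine gap that you yourself acknowledge but do not close. Locating elements of $A^*(u)$ inside $N_1(u)\setminus\{v\}$ yields at best $\deg_1(u)-1-\Delta_2$, and there is no reason $\deg_1(u)$ should be close to $\Delta_1$ (the vertex $u$ is merely an endpoint of a purple edge and may have blue degree as low as $1$), nor any reason $\Delta_2$ should be negligible compared to $\Delta_1$ in this claim. Your proposed rescues --- sharpening the $\Delta_2$ overhead via the $6$- and $8$-cycle exclusions, invoking Corr\'adi's lemma on the family $\{N_1(y):y\in N_2(u)\}$, or harvesting further elements from $N_2(v)$ --- are all local and cannot recover the exact bound $\Delta_1-1$ in general: if $\deg_1(u)=1$ then $N_1(u)=\{v\}$ and $N_2(v)$ has size at most $\Delta_2$, so even the union of these sources contributes far fewer than $\Delta_1-1$ elements.

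The paper's argument is both simpler and uses none of the forbidden-cycle hypotheses. From Claim~\ref{clm:links1} one has $[n]\subseteq N_1(N_2(u))\cup N_2(N_1(u))\cup\{v\}$, and since $N_2(N_1(u))\subseteq A^*(u)\cup N_2(u)\cup N_1(N_2(u))$ by the definition of $A^*(u)$ (and $v\in N_2(u)$), this gives the global covering $[n]\subseteq N_1(N_2(u))\cup A^*(u)\cup N_2(u)$. Hence
\[
|A^*(u)|\ \ge\ n-|N_1(N_2(u))|-|N_2(u)|\ \ge\ n-\Delta_1\Delta_2-\Delta_2\ \ge\ \Delta_1-1,
\]
the last step using only the BEC hypothesis $n\ge(\Delta_1+1)(\Delta_2+1)-1$. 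The bound for $B^*(u)$ is symmetric. The point you are missing is that $A^*(u)$ is not naturally contained in any small neighbourhood of $u$; it is the \emph{complement} of $N_1(N_2(u))\cup N_2(u)$ inside $N_2(N_1(u))$, and the right way to bound it from below is to bound its complement from above.
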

\begin{proof}
Suppose otherwise.
If $|A^*(u)| \le \Delta_1-2$, note that $[n] \subseteq N_1(N_2(u)) \cup A^*(u) \cup N_2(u)$ by Claim~\ref{clm:links1}, and so
\begin{align*}
n\le |N_1(N_2(u))| + |A^*(u)| + |N_2(u)| \le \Delta_1\Delta_2 + \Delta_1-2 + \Delta_2.
\end{align*}
Symmetrically, if $|B^*(u)| \le \Delta_2-2$, then
\begin{align*}
n&\le |N_2(N_1(u))| + |B^*(u)| + |N_1(u)| \le \Delta_1\Delta_2 + \Delta_2-2 + \Delta_1.
\end{align*}
In either case, we obtain a contradiction to the assumption that $n \ge (\Delta_1+1)(\Delta_2+1)-1$.
\end{proof}

\section{Engine of the proof}\label{sec:engine}

The following technical bound forms the core of the argument. It bounds the intersection of any two mixed second order neighbourhoods in our hypothetical critical counterexample. %\textit{[The proof proceeds by showing that certain neighbourhood subsets $A_1,A_2,\ldots, A_N$ must all be large while having pairwise small intersection. An application of Corrad\i's lemma then shows that $N$ must be small.]}
The bound relies on an application of Corr\'adi's lemma (Lemma~\ref{CorCor}).

%Throughout this section we fix an integer $t\ge 2$ and define $C_t := 2+2\sqrt{2(t-1)}$.

\begin{Claim}\label{le:postcorradi}
For any integer $t\ge 2$ and distinct $a, b\in[n]$,
\begin{align*}
|N_1(N_2(a)) \cap N_1(N_2(b))|
%&\le \frac{C_t}{2} \sqrt{\Delta_2}(\Delta_1+\Delta_2) + \frac{1}{t} \Delta_1 \Delta_2 \text{ and}\\
& \le  \Delta_1 + \Delta_2 + \sqrt{1.37(t-1)} \Delta_2\sqrt{\Delta_2} + \\
& \hspace{50pt} \frac{\sqrt{1.37}}{0.37\sqrt{t-1}} \Delta_1\sqrt{\Delta_2}  + \frac{1}{t} \Delta_1 \Delta_2 \text{ and}\\
|N_2(N_1(a)) \cap N_2(N_1(b))|
%&\le \frac{C_t}{2} \sqrt{\Delta_1}(\Delta_2+\Delta_1) +  \frac{1}{t} \Delta_1 \Delta_2.
& \le  \Delta_1 + \Delta_2 + \sqrt{1.37(t-1)} \Delta_1\sqrt{\Delta_1} + \\
& \hspace{50pt} \frac{\sqrt{1.37}}{0.37\sqrt{t-1}} \Delta_2\sqrt{\Delta_1} + \frac{1}{t} \Delta_1 \Delta_2.
\end{align*}
\end{Claim}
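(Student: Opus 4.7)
The plan is to bound $M := N_1(N_2(a)) \cap N_1(N_2(b))$ by decomposing it based on the link multiplicities $\alpha_j := |N_1(j) \cap N_2(a)|$ and $\beta_j := |N_1(j) \cap N_2(b)|$ (both at least $1$ for $j \in M$), combining Corr\'adi's Lemma (Lemma~\ref{CorCor}) with elementary double-counting. The $C_4$-freeness hypothesis on $G_1$ and $G_2$ yields $|N_1(j) \cap N_1(j')| \le 1$ for distinct $j, j'$ and $|N_2(a) \cap N_2(b)| \le 1$; these are the only graph-theoretic ingredients required.

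Set the threshold $k := \lceil\sqrt{1.37(t-1)\Delta_2}\rceil$, chosen so that $k^2 > (t-1)\Delta_2$ and $k^2 - (t-1)\Delta_2 = 0.37(t-1)\Delta_2$, cleanly exposing the constants $\sqrt{1.37}$ and $0.37$. The first step is the double count $\sum_{j \in M}\alpha_j \le \sum_{i' \in N_2(a)} |N_1(i')| \le \Delta_1\Delta_2$, which immediately yields $|\{j \in M : \alpha_j \ge t\}| \le \Delta_1\Delta_2/t$, producing the final $\Delta_1\Delta_2/t$ term of the bound.

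It remains to handle $\{j \in M : \alpha_j < t\}$. For each $i' \in N_2(a)$, consider $M_{i'} := N_1(i') \cap M$ and apply Corr\'adi's Lemma to the family $\{N_1(j) \cap N_2(b) : j \in M_{i'},\ \beta_j \ge k\}$ in the ambient set $X := N_2(b)$: the cardinalities are at least $k$, the pairwise intersections are at most $1 \le t-1$, and $k^2 > (t-1)\Delta_2$. This gives $|\{j \in M_{i'} : \beta_j \ge k\}| \le \Delta_2(k - (t-1))/(k^2 - (t-1)\Delta_2) = (k-(t-1))/(0.37(t-1)) \le \frac{\sqrt{1.37}}{0.37\sqrt{t-1}}\sqrt{\Delta_2}$ per $i'$. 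For $j \in M_{i'}$ with $\beta_j < k$, one uses the double count $\sum_{j \in M_{i'}}\beta_j = \sum_{i'' \in N_2(b)}|N_1(i') \cap N_1(i'')|$, which by $C_4$-freeness is at most $\Delta_2$ when $i' \notin N_2(b)$ (each off-diagonal summand $\le 1$) and at most $\Delta_1+\Delta_2-1$ when $i' \in N_2(b)$ (the single diagonal summand $|N_1(i')| \le \Delta_1$). Summing these two per-$i'$ bounds over the at most $\Delta_2$ choices of $i'$, accounting for the overcounting factor $\alpha_j \le t-1$ on $\{\alpha_j < t\}$, and absorbing the at-most-one diagonal $i^* \in N_2(a) \cap N_2(b)$ into the additive $\Delta_1+\Delta_2$, produces the middle terms $\sqrt{1.37(t-1)}\Delta_2^{3/2}$ and $\frac{\sqrt{1.37}}{0.37\sqrt{t-1}}\Delta_1\sqrt{\Delta_2}$.

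The main obstacle is the careful bookkeeping of the overcounting factors and of the diagonal contribution from $N_2(a) \cap N_2(b)$, so that the multiplicative constants assemble precisely into $\sqrt{1.37(t-1)}$ and $\frac{\sqrt{1.37}}{0.37\sqrt{t-1}}$ rather than picking up spurious $(t-1)$- or $0.37$-factors; this is where the specific value $1.37$ is calibrated. The second inequality of the claim follows from the entirely analogous argument after interchanging the roles of $G_1$ and $G_2$ throughout (and hence of $\Delta_1$, $\Delta_2$, $a$, $b$).
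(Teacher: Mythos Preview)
Your small-$\beta$ case does not close. For $i'\notin N_2(b)$ your double count correctly gives $\sum_{j\in M_{i'}}\beta_j\le\Delta_2$, but since each $\beta_j\ge 1$ this only yields $|M_{i'}|\le\Delta_2$; summing over the (up to) $\Delta_2$ choices of $i'$ gives $\Delta_2^2$, not $\sqrt{1.37(t-1)}\,\Delta_2^{3/2}$. The ``overcounting factor $\alpha_j\le t-1$'' goes the wrong way: from $\sum_{i'}|\{j\in M_{i'}:\ldots\}|=\sum_j\alpha_j\,\mathbbm{1}_{\{\ldots\}}$ and $1\le\alpha_j\le t-1$ you get an \emph{upper} bound on the count only by using $\alpha_j\ge1$, which gains nothing. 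So the argument as written produces an extra $\Delta_2^2$ term and does not prove the stated inequality.

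More structurally, your proof only invokes $C_4$-freeness of $G_1$ and $G_2$; the parameter $t$ enters your Corr\'adi step solely through the throwaway inequality $1\le t-1$, which means $t$ is doing no real work. In the paper's argument the even-girth-$\ge10$ hypothesis is exactly what makes $t$ meaningful. The paper applies Corr\'adi not to the family $\{N_1(j)\cap N_2(b)\}_{j}$ inside $N_2(b)$, but to the family $\{A_t(x)\}_{x\in N_2(b)}$ of subsets of $N_2(a)\setminus N_2(b)$, where $A_t(x)$ records which red neighbours $x^*$ of $a$ have $N_1(x)\cap D_t(x^*)\ne\emptyset$ (after first pruning your set $\{\alpha_j\ge t\}$ and also $N_2(b)$). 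The crucial step is that $|A_t(x_1)\cap A_t(x_2)|\le t-1$ for distinct $x_1,x_2\in N_2(b)$: if this failed one would find vertices $x_1,x_2,x_1^*,x_2^*,y_{11},y_{12},y_{21},y_{22}$ forcing a blue $4$-, $6$- or $8$-cycle. This is where the $C_6$- and $C_8$-freeness hypotheses are actually used, and it is why the Corr\'adi bound legitimately carries the factor $t-1$. With that in hand, one bounds $|R_t(k)|$ (the $x\in N_2(b)$ with $|N_1(x)\cap D_t|>k$) by $\frac{\sqrt{1.37}}{0.37}\sqrt{\Delta_2/(t-1)}$, and then $|N_1(N_2(b))\cap D_t|\le k\,\Delta_2+|R_t(k)|\,\Delta_1$ gives precisely the two middle terms. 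Your decomposition by $(\alpha_j,\beta_j)$ is a natural idea, but the Corr\'adi step needs to be placed at the level of $N_2(a)$ versus $N_2(b)$, not per $i'$, in order to access the girth hypothesis.
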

\begin{proof}
By symmetry we only need to prove the first bound. Our approach to this is to partition $N_1(N_2(a)) \cap N_1(N_2(b))$ into a number of subsets, each of which we bound separately. To assist the reader, we have provided a depiction of our partition scheme in Figure~\ref{fig:vertexsets}.

\begin{figure}
 \begin{center}
   \begin{overpic}[width=0.7\textwidth]{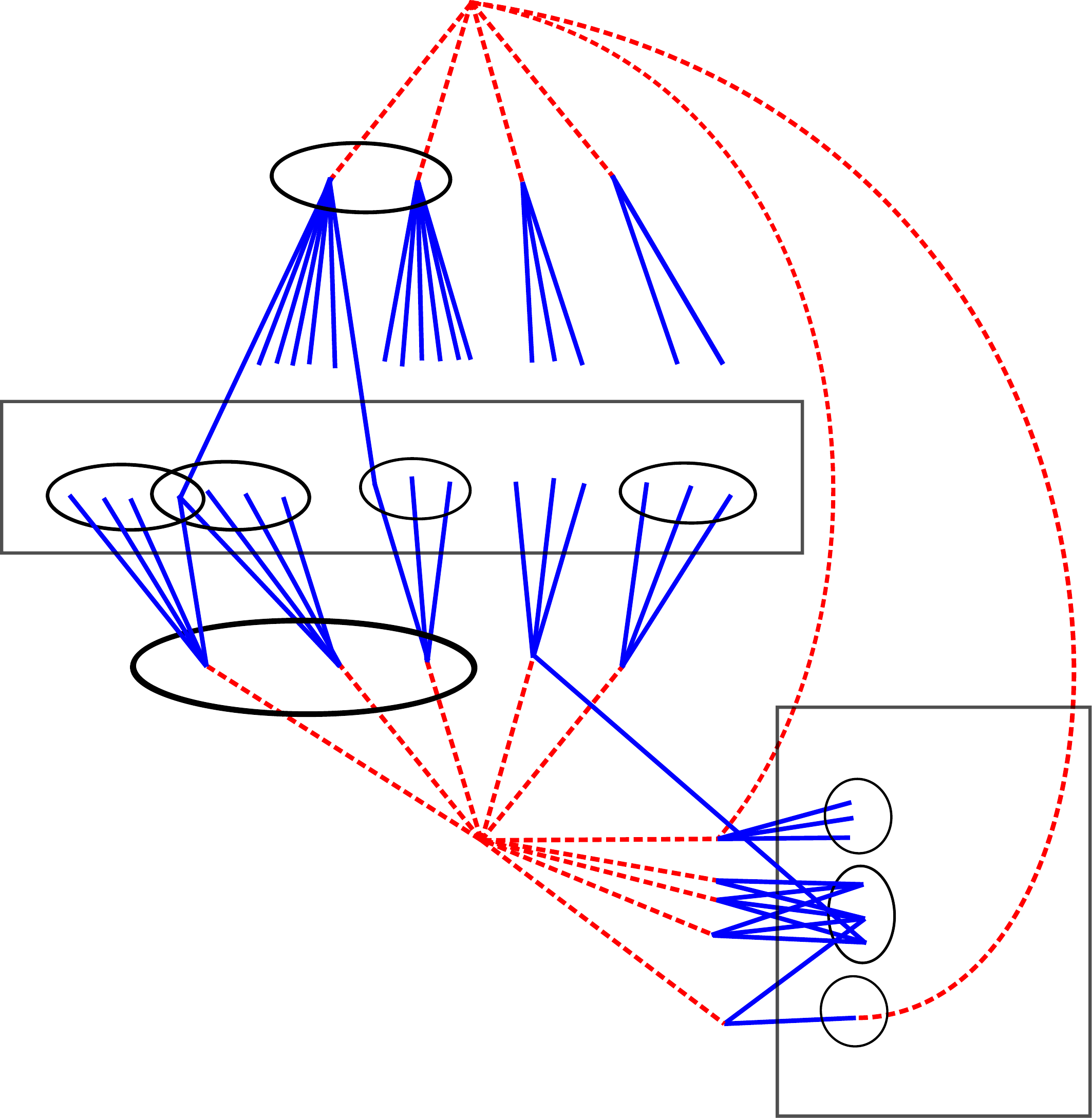}
    \put (41,101){$b$}

    \put (24.8,84){ {\small $x_1$} }
    \put (32.5,84){ {\small $x_2$} }
    \put (54.4,84){ {\small $x_{\dots}$} }
    
  % \put (14,85.8){ $R_t(k)$ }
    \put (13.5,83.5){ $R_t(k)$ }
     
    \put (5,60){$D_t(x_1^*)$}
    \put (19.6,60){$D_t(x_2^*)$}
    \put (33.5,60){$D_t(x_3^*)$}
    \put (55,60){$D_t(x_{\ldots}^*)$}
    
    \put (-5.5,56.1){$D_t$}
    
 %  \put (2,36){$A_t(x_1)$}  
	\put (0.75,39){$A_t(x_1)$}      
    
    \put (13,39){ {\small $x_1^*$} }
    \put (24.5,39){ {\small $x_2^*$} }
    \put (33,39){ {\small $x_3^*$} }
    \put (55.1,39){ {\small $x_{\ldots}^*$} }

    \put (41,22){ $a$ }

    \put (70,31.2){ {\tiny $N_1(N_2(a)\cap N_2(b))$} }
    \put (79.5,17){ {\small $Q_t$} }
    \put (70,4){ {\tiny $N_1(N_2(a)) \cup N_2(b)$} } 

  \put (71.5,38) { {\footnotesize Remainder terms }} 
  %  \put (43,0.5) { {\small Remainder terms }} 
  %  \put (69,-2.5) { {\small Remainder terms }} 

 \end{overpic}
  \caption{A depiction of the vertex sets relevant to the proof of Claim~\ref{le:postcorradi}.\label{fig:vertexsets}}
\end{center}
\end{figure}

Before starting the main argument, we first need to prune the neighbourhood $N_1(N_2(a))$ of three types of relatively small subsets.

\begin{itemize}
\item First, since $G_2$ is $C_4$-free, $|N_2(a) \cap N_2(b)| \le 1$, so 
\begin{align}\label{eq:sep1}
|N_1(N_2(a)\cap N_2(b))|\le \Delta_1.
\end{align} 
Thus we can restrict our attention to $|N_1(N_2(a) \setminus N_2(b)) \cap N_1(N_2(b))|$. The reason for this technical reduction is so that we can work with the \textit{disjoint} sets $N_2(a) \setminus N_2(b)$ and $N_2(b)$.

\item 
Second, define
\begin{align*}
%Q_t:= \left\{  y \in N_1(N_2(a)) \mid \exists \text{ distinct } x_1^*, \ldots x_t^* \in N_2(a) \text{ such that } y \in \bigcap_{j=1}^{t} N_1(x_i^*)\right\},
Q_t:= \{ y \in N_1(N_2(a)) \mid |N_1(y)\cap N_2(a)|\ge t \}.
\end{align*}
So $Q_t$ is the set of vertices in $N_1(N_2(a))$ that are in the blue neighbourhoods of at least $t$ different red neighbours of $a$.
We estimate $|Q_t|$ separately, because its elements facilitate a large amount of overlap among the blue neighbourhoods of (at most $t$ different) vertices in $N_2(a)$, while still not violating the absence of large cycles. By an overcounting argument, 
\begin{align} \label{eq:sep2}
|Q_t| \le \sum_{x \in N_2(a)} \sum_{y \in N_1(x)} \frac{\mathbbm{1}_{ \left\{ y \in Q_t  \right\} }}{t} \le \frac{1}{t} \sum_{x \in N_2(a)} \sum_{y \in N_1(x)} 1 \le \frac{\Delta_1\Delta_2}{t}.
\end{align}

\item
Third, we estimate $|N_1(N_2(a)) \cap N_2(b)|$ separately, because later we wish to be able to assume that there are no blue edges between $N_2(a)$ and $N_2(b)$. We have that
\begin{align} \label{eq:sep3}
|N_1(N_2(a))  \cap N_2(b)| \le |N_2(b)| \le \Delta_2. 
\end{align} 
\end{itemize}
 
Having established the estimates~\eqref{eq:sep1},~\eqref{eq:sep2} and~\eqref{eq:sep3} separately, we are left with estimating $|N_1(N_2(b)) \cap  (N_1(N_2(a)\setminus N_2(b)) \setminus (Q_t \cup N_2(b) ))|$, and we do so with Lemma~\ref{CorCor}.
 
For brevity, define $D_t := N_1(N_2(a)\setminus N_2(b)) \setminus (Q_t \cup N_2(b))$ and $D_t(x^*):= N_1(x^*) \setminus (Q_t \cup N_2(b))$ for any vertex $x^*\in N_2(a)\setminus N_2(b)$. Note that $D_t = \bigcup_{x^* \in N_2(a)\setminus N_2(b)} D_t(x^*)$ and our goal now is to bound $|N_1(N_2(b)) \cap D_t|$.
 
Define $k:= \sqrt{1.37(t-1)\Delta_2} $ and let
\begin{align*}
R_t(k):= \left\{ x \in N_2(b) \mid |N_1(x) \cap D_t| > k \right\}.
\end{align*}
So $R_t(k)$ is the set of red neighbours of $b$ that each have `large' blue neighbourhoods intersecting $D_t$. We want to show that $|R_t(k)|$ is small, so without loss of generality we may assume that $k$ is small enough to ensure that $R_t(k) \neq \emptyset$.

For each $x \in N_2(b)$, define the set
\begin{align*}
A_t(x):= \left\{x^* \in N_2(a)\setminus N_2(b) \mid N_1(x) \cap D_t(x^*) \neq \emptyset \right\}.
\end{align*}
For the moment, let us assume that we have established the following two properties:
%Since $G_1$ has no $4$-cycle, it holds for each $x \in N_2(b)$ and $x^* \in N_2(a)\setminus N_2(b)$ that $|N_1(x) \cap D_t(x^*)|\le 1$. So for a fixed $x\in R_t(k) \subseteq N_2(b)$, each $x^* \in N_2(a) \setminus N_2(b)$ contributes at most $1$ to $|N_1(x) \cap D_t|$. Therefore,
%
\begin{align}
\label{eq:A_size}
|A_t(x)|
&> k &\text{ for all } x \in R_t(k); \\
%\end{align}
% 
%Furthermore, suppose (for a contradiction) that there exist distinct $x_1,x_2 \in N_2(b)$ such that $|A_t(x_1) \cap A_t(x_2)| \ge t$. Then there are at least $t$ different vertices $x_1^*,\ldots x_t^* \in N_2(a) \setminus N_2(b)$ that have neighbours $y_{11} \in D_t(x_1^*)\cap N_1(x_1)$, \dots, $y_{t1} \in D_t(x_t^*)\cap N_1(x_1)$ and neighbours $y_{12} \in D_t(x_1^*)\cap N_1(x_2)$, \ldots, $y_{t2} \in D_t(x_t^*)\cap N_1(x_2)$. We excluded elements of the set $Q_t$ from each of the sets $D_t(\cdot)$, so the vertices $y_{11}, \ldots y_{t1}$ are not all equal. Without loss of generality, we may assume that $y_{11} \ne y_{21}$. Note though that some of the vertices $y_{11}$, $y_{21}$, $y_{21}$, $y_{22}$ may well be equal. By a case analysis --- see Figure~\ref{fig:cyclecases} --- it can be shown that the induced subgraph $G_1[ \{x_1,x_2,x_1^*,x_2^*,y_{11}, y_{21},y_{21}, y_{22} \}]$ contains a $4$-, $6$- or $8$-cycle, which is not allowed. 
%It then follows that
%
%\begin{align}
\label{eq:A_overlap}
|A_t(x_1) \cap A_t(x_2)| 
&\le t-1 &\text{ for all distinct } x_1,x_2 \in N_2(b).
\end{align}
We prove these two properties later, but let us first show how from these both a bound on $|R_t(k)|$ and then the desired result follow.

Note that we have chosen $k$ such that $k^2 = 1.37(t-1) \Delta_2 > (t-1) |N_2(a) \setminus N_2(b)|$.
By this choice and the inequalities in~\eqref{eq:A_size} and~\eqref{eq:A_overlap}, we may apply Lemma~\ref{CorCor} with $N= |R_t(k)|$, $X= N_2(a)\setminus N_2(b)$, the parameters $t$ and $k$, and the collection $\left( A_t(x) \right)_{x \in R_t(k)}$ of subsets of $X$, yielding the following bound:
\begin{align*}
|R_t(k)| &\le |N_2(a)\setminus N_2(b)| \cdot \frac{k -(t-1)}{k^2-(t-1) |N_2(a)\setminus N_2(b)|} \\
%&\le |N_2(a)\setminus N_2(b)| \cdot \frac{k}{k^2-(t-1) |N_2(a)\setminus N_2(b)|} \\
&\le \Delta_2 \cdot \frac{ \sqrt{1.37(t-1)\Delta_2}}{1.37(t-1) \Delta_2 -(t-1) \Delta_2}
= \frac{\sqrt{1.37}}{0.37}\sqrt{\frac{\Delta_2}{t-1}}.
%\hspace{15pt}=: \gamma_t.
\end{align*} 

%\textit{(Actually need to optimize the above bound for each $k$, but not clear yet whether it will be worth the trouble.)}

We can then bound the main term as follows:
\begin{align}
|N_1(N_2(b)) \cap D_t| & \le  |\{ x \in N_2(b) \mid |N_1(x)  \cap D_t| \le k \}| \cdot k + |R_t(k)| \Delta_1\nonumber\\
%& \le (\Delta_2-|R_t(k)|) k + |R_t(k)| \Delta_1
& \le \Delta_2 k + |R_t(k)| \Delta_1
%\end{align*}
%Because we chose $k$ to be of order $\sqrt{\Delta_2}$, the two terms in this bound are of the same order, thus making the total bound small. Indeed, evaluating $k$ and $\gamma_t$ yields
%\begin{align*}|N_1(N_2(b)) \cap D_t| \le \min_{k \in \left\{\sqrt{2(t-1) \Delta_2},\ldots, \Delta_1 \right\}}(\Delta_2- \gamma_t(k)) \cdot k + \gamma_t(k) \cdot \Delta_1,\end{align*}
%and in particular (for example by choosing $k = \sqrt{2(t-1) \Delta_2}$ \textit{(modulo rounding...)} ,
%
%\begin{align} 
\label{eq:mainterm}
%|N_1(N_2(b)) \cap D_t| &
\le \Delta_2 \sqrt{1.37(t-1) \Delta_2}  + \frac{\sqrt{1.37}}{0.37}\sqrt{\frac{\Delta_2}{t-1}} \Delta_1\nonumber\\
%&\le \sqrt{2(t-1)\Delta_2} (\Delta_1 + \Delta_2).
%&\le 2\sqrt{2(t-1)} \cdot \Delta_1^{3/2}.\\
&= \frac{\sqrt{1.37}}{0.37\sqrt{t-1}} \Delta_1\sqrt{\Delta_2}+\sqrt{1.37(t-1)} \Delta_2\sqrt{\Delta_2}.
\end{align}

Combining inequalities~\eqref{eq:sep1},~\eqref{eq:sep2},~\eqref{eq:sep3} and~\eqref{eq:mainterm}, we obtain
\begin{align*}
&|N_1(N_2(b)) \cap N_1(N_2(a))|\\
& \le |N_1(N_2(b)) \cap  D_t| + |N_1(N_2(a) \cap N_2(b))|+
|N_1(N_2(b)) \cap Q_t| + |N_1(N_2(b)) \cap N_2(b)| \\
&\le \frac{\sqrt{1.37}}{0.37\sqrt{t-1}} \Delta_1\sqrt{\Delta_2}+\sqrt{1.37(t-1)} \Delta_2\sqrt{\Delta_2} +
\Delta_1 + \frac{1}{t} \Delta_1 \Delta_2 + \Delta_2,
%&\le \sqrt{2(t-1)\Delta_2}(\Delta_1 + \Delta_2) + \Delta_1 + \frac{1}{t} \Delta_1 \Delta_2 + \Delta_2 \\
%&\le \frac{C_t}{2} \sqrt{\Delta_2} (\Delta_1 + \Delta_2) + \frac{1}{t} \Delta_1 \Delta_2. \qedhere
%%&\le C_t \cdot \Delta_1^{3/2} +  \frac{1}{t} \Delta_1 \Delta_2.
\end{align*} 
which is the desired result.

So to complete the proof, it only remains to show the two properties~\eqref{eq:A_size} and~\eqref{eq:A_overlap}.

For~\eqref{eq:A_size}, since $G_1$ has no $4$-cycle, it holds that $|N_1(x) \cap D_t(x^*)|\le 1$  for each $x \in N_2(b)$ and $x^* \in N_2(a)\setminus N_2(b)$. So for a fixed $x\in R_t(k) \subseteq N_2(b)$, each $x^* \in N_2(a) \setminus N_2(b)$ contributes at most $1$ to $|N_1(x) \cap D_t|$. This proves~\eqref{eq:A_size}.

To prove~\eqref{eq:A_overlap}, suppose for a contradiction that there exist distinct $x_1,x_2 \in N_2(b)$ such that $|A_t(x_1) \cap A_t(x_2)| \ge t$. Then there are at least $t$ different vertices $x_1^*,\ldots, x_t^* \in N_2(a) \setminus N_2(b)$, and there exist vertices $y_{11} \in D_t(x_1^*)\cap N_1(x_1)$, \dots, $y_{t1} \in D_t(x_t^*)\cap N_1(x_1)$ as well as vertices $y_{12} \in D_t(x_1^*)\cap N_1(x_2)$, \ldots, $y_{t2} \in D_t(x_t^*)\cap N_1(x_2)$. Due to the separate estimate~\eqref{eq:sep2}, we were allowed to exclude elements of the set $Q_t$ in our choice of the sets $D_t(\cdot)$, and so the vertices $y_{11}, \ldots y_{t1}$ are not all equal. Recall that we assumed $t\ge 2$. Without loss of generality, we may assume that $y_{11} \ne y_{21}$. Note though that some of the vertices $y_{11}$, $y_{21}$, $y_{12}$, $y_{22}$ may well be equal. Due to the separate estimate~\eqref{eq:sep3}, we were also allowed to exclude elements of $N_2(b)$ in our choice of $D_t(\cdot)$, and so $x_1x_1^*$, $x_1x_2^*$, $x_2x_1^*$, $x_2x_2^*$ are not blue edges. Therefore $\left\{x_1,x_2,x_1^*,x_2^* \right\} \cap \left\{y_{11},y_{12},y_{21},y_{22} \right\} = \emptyset$.

It can be shown that the induced subgraph $G_1[ \{x_1,x_2,x_1^*,x_2^*,y_{11}, y_{12},y_{21}, y_{22} \}]$ contains a $4$-, $6$- or $8$-cycle, which is a contradiction. 
To wit, the case analysis proceeds as follows. See Figure~\ref{fig:cyclecases} for a pictorial synopsis.
Since $y_{11}\ne y_{21}$, there are four cases for the possible coincidences among $y_{11}$, $y_{21}$, $y_{12}$, $y_{22}$:
\begin{enumerate}
\item
{\em The vertices are all distinct.} Then $y_{11}x_1y_{21}x_2^*y_{22}x_2y_{12}x_1^*$ is a blue $8$-cycle.
\item
{\em Exactly one pair of the vertices coincides.} Since $y_{11}\ne y_{21}$, there are five subcases: $y_{11}=y_{12}$, $y_{11}=y_{22}$, $y_{12}=y_{21}$, $y_{12}=y_{22}$, or $y_{21}=y_{22}$. We can consider each subcase individually (as in Figure~\ref{fig:cyclecases}), or we can also notice some symmetries by  a relabelling of the vertices $x_1,x_2,x_1^*,x_2^*,y_{11}, y_{12},y_{21}, y_{22}$.
The three subcases $y_{11}=y_{12}$, $y_{12}=y_{22}$ and  $y_{21}=y_{22}$ are symmetric, and in the first of these subcases $y_{11}x_1y_{21}x_2^*y_{22}x_2$ is a blue $6$-cycle. The two remaining subcases $y_{11}=y_{22}$ and $y_{12}=y_{21}$ are symmetric, and in the first of these $y_{11}x_1y_{21}x_2^*$ is a blue $4$-cycle.
\item
{\em A triple of the vertices coincides.} Since $y_{11}\ne y_{21}$, there are two subcases: $y_{12}=y_{21}=y_{22}$ or $y_{11}=y_{12}=y_{22}$. In the first of these $y_{11}x_1y_{12}x_1^*$ is a blue $4$-cycle, while in the second $y_{11}x_1y_{21}x_2^*$ is a blue $4$-cycle.
\item
{\em Two pairs of the vertices coincide.} Since $y_{11}\ne y_{21}$, there are two subcases: $y_{11}=y_{12},y_{21}=y_{22}$ or  $y_{11}=y_{22},y_{12}=y_{21}$. In both of these $y_{11}x_1y_{21}x_2$ is a blue $4$-cycle. \qedhere
\end{enumerate}
\end{proof}

\begin{figure}
 \begin{center}
	   \begin{overpic}[width=1\textwidth]{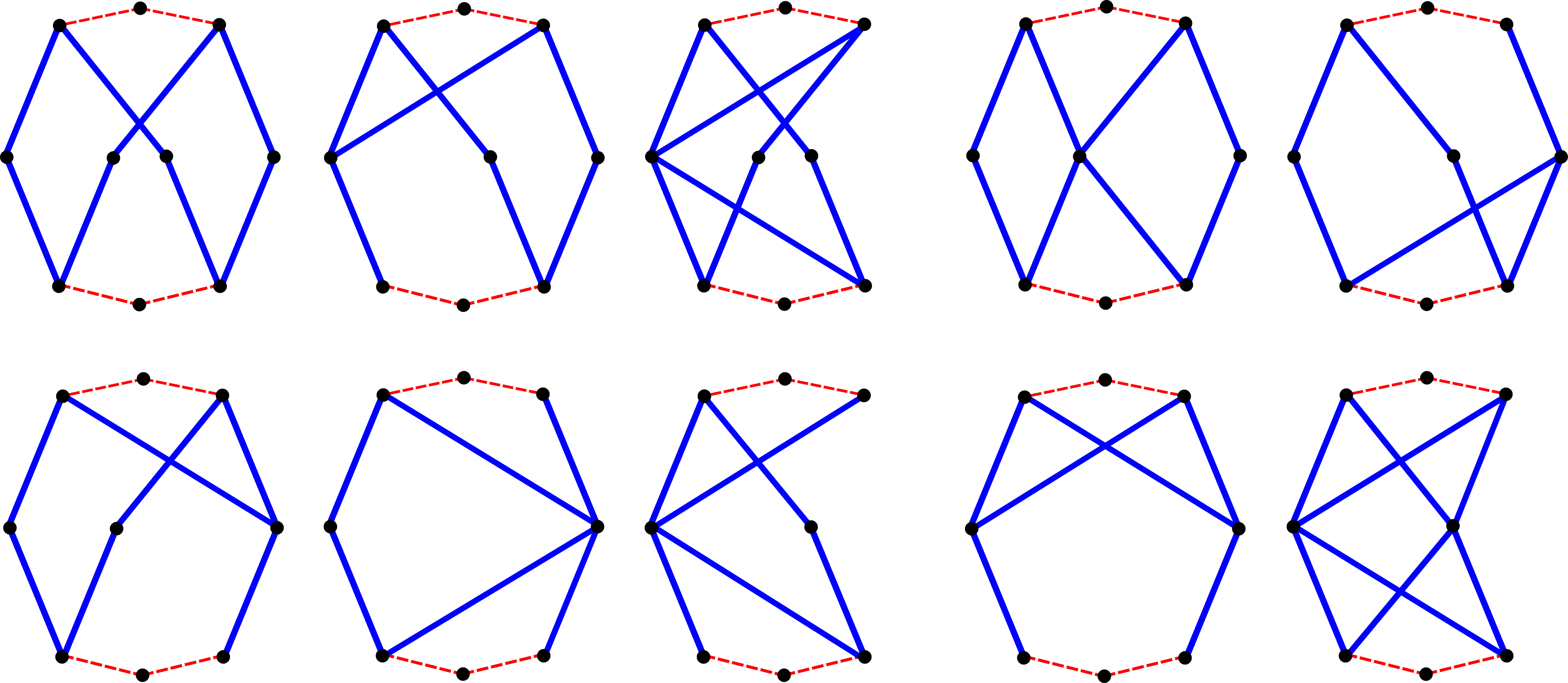}
    \put (7.5,44){ $b$}
    \put (0,42.5){ $x_1$}
    \put (13.5,42.5){ $x_2$}    
    \put (0.5,32.5){  {\scriptsize $y_{11}$}  }
    \put (3.8,34.4){  {\scriptsize $y_{12}$}  }   
    \put (9.9,34.4){  {\scriptsize $y_{21}$}  }
    \put (13.2,32.5){  {\scriptsize $y_{22}$}  }
    \put (0,23){ $x_1^*$}
    \put (13.5,23){ $x_2^*$}  
    \put (7.5,22){ $a$}  
 \end{overpic}
  \caption{The cases analysed in Claim~\ref{le:postcorradi}. We know $y_{11} \neq y_{21}$, but some of the vertices $y_{11}$, $y_{12}$, $y_{21}$, $y_{22}$ may coincide. As shown here, in each case there is a blue $4$-, $6$- or $8$-cycle. In reading order, the depicted cases are: (a) all are distinct, (b)--(f) exactly one pair of vertices coincides, (g)--(h) a triple of vertices coincides, (i)--(j) two pairs of vertices coincide. \label{fig:cyclecases}}
\end{center}
\end{figure}

We in fact use a weaker but handier version of Claim~\ref{le:postcorradi}.
For each $t\ge 2$, define
\begin{align*}
C_t := \frac{\sqrt{1.37}}{0.37\sqrt{t-1}} + \sqrt{1.37(t-1)}.
\end{align*}

\begin{Claim}\label{le:mixedNN}\label{le:AB2}
For each $t\ge 2$, we have that $\Delta_1+\Delta_2+C_t \Delta_1\sqrt{\Delta_1} + \Delta_1\Delta_2/t$ is an upper bound for each of the following quantities:
$|N_1(N_2(u)) \cap N_1(N_2(v))|$, $|N_2(N_1(u)) \cap N_2(N_1(v))|$, $|A(v)|$, $|B(v)|$, $|A(u)|$, $|B(u)|$.
\end{Claim}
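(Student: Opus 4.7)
The plan is to derive this handier version directly from Claim~\ref{le:postcorradi}, using Claims~\ref{clm:links1}, \ref{clm:links2}, and \ref{le:AB1} only to reduce each of the six quantities to an intersection of the form $|N_1(N_2(a)) \cap N_1(N_2(b))|$ or $|N_2(N_1(a)) \cap N_2(N_1(b))|$ for appropriate distinct $a,b \in [n]$.

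First I would exploit the standing convention $\Delta_2 \le \Delta_1$: under this assumption $\Delta_2\sqrt{\Delta_2} \le \Delta_1\sqrt{\Delta_1}$ and $\Delta_1\sqrt{\Delta_2} \le \Delta_1\sqrt{\Delta_1}$, so both bounds in Claim~\ref{le:postcorradi} collapse to $\Delta_1 + \Delta_2 + C_t\Delta_1\sqrt{\Delta_1} + \Delta_1\Delta_2/t$, the two coefficients $\sqrt{1.37(t-1)}$ and $\sqrt{1.37}/(0.37\sqrt{t-1})$ combining into $C_t$. Applied with $(a,b)=(u,v)$ this already yields the first two claimed bounds, noting that $u \neq v$ is automatic because $uv$ is a (purple) edge.

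For $|A(u)|$ the idea is that $A(u)$ sits inside two different second-order mixed neighbourhoods. By definition $A(u) \subseteq N_2(N_1(u))$. By Claim~\ref{le:AB1} the set $B^*(u)$ is non-empty, so I would fix any $b \in B^*(u)$; the second half of Claim~\ref{clm:links2} then guarantees that every $a \in A(u)$ admits a blue--red-link from $b$, giving $a \in N_2(N_1(b))$, whence $A(u) \subseteq N_2(N_1(u)) \cap N_2(N_1(b))$. The vertex $u$ is distinct from $b$, because $v \in N_1(u) \cap N_2(u)$ forces $u \in N_2(N_1(u))$, and $N_2(N_1(u))$ is excluded from $B^*(u)$. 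Claim~\ref{le:postcorradi} then delivers the desired bound. The argument for $|B(u)|$ is symmetric: pick $a \in A^*(u)$ (non-empty by Claim~\ref{le:AB1} and distinct from $u$ by the analogous observation that $u \in N_1(N_2(u))$), use the first half of Claim~\ref{clm:links2} to obtain $B(u) \subseteq N_1(N_2(u)) \cap N_1(N_2(a))$, and invoke Claim~\ref{le:postcorradi}. The bounds on $|A(v)|$ and $|B(v)|$ follow by exactly the same reasoning with $v$ in place of $u$, since all preliminary claims hold symmetrically at $v$.

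No serious obstacle is anticipated: the Corr\'adi-style estimate in Claim~\ref{le:postcorradi} already absorbs the genuine difficulty, and the present claim is essentially a packaging step combining that with the simple inclusion observations above.
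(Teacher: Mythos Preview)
Your proposal is correct and follows essentially the same approach as the paper: collapse both estimates of Claim~\ref{le:postcorradi} via $\Delta_2\le\Delta_1$ to get the $C_t\Delta_1\sqrt{\Delta_1}$ bound, apply it directly with $(a,b)=(u,v)$ for the first two quantities, and for each of $A(\cdot)$, $B(\cdot)$ use Claims~\ref{le:AB1} and~\ref{clm:links2} to exhibit an auxiliary vertex placing the set inside an intersection of two mixed second-order neighbourhoods. The only cosmetic difference is that the paper spells out $|B(u)|$ while you spell out $|A(u)|$, and you give a more explicit justification (via $u\in N_2(N_1(u))$) for the distinctness $b\ne u$ that the paper states parenthetically.
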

\begin{proof}
For the first two quantities, apply Claim~\ref{le:postcorradi} with $a=v$ and $b=u$ and note that $\Delta_1 \ge \Delta_2$, by assumption.

For the last quantity, note first that $|A^*(u)|\ge 1$ by Claim~\ref{le:AB1}. By Claim~\ref{clm:links2}, there exists $a\in A^*(u)$ (not equal to $u$) such that $B(u) \subseteq N_1(N_2(a)) \cap N_1(N_2(u))$.
The bound follows from Claim~\ref{le:postcorradi} with $a$ and $b=u$ and the assumption that $\Delta_1 \ge \Delta_2$.
%Applying Claim~\ref{le:postcorradi} to $a$ and $u$ yields
%$|B(u)| \le |N_1(N_2(a)) \cap N_1(N_2(u))| \le \tfrac{C_t}{2} \sqrt{\Delta_2}(\Delta_1+\Delta_2) + \frac{1}{t} \Delta_1 \Delta_2$. The desired inequality follows because $\Delta_1 \ge \Delta_2$ by assumption.
The proof for the remaining quantities is the same.
\end{proof}

\section{Conclusion of the proof}\label{sec:proof}

We are ready to complete the proof of Theorem~\ref{th:newtheorem}.
%\begin{Theorem}\label{th:newtheorem_laterverwijderen}
%Let $G_1,G_2$ be two graphs without $4$-, $6$- and $8$-cycles, such that %$(\Delta_1+1)(\Delta_2+1) \le n+1$, and $\Delta_1 \ge \Delta_2 \ge  (\ldots %\approx 2.70 \cdot 10^7$). 
%Then $G_1$ and $G_2$ pack.
%\end{Theorem}
%\begin{proof}
%Without loss of generality, $\Delta_1= \max(\Delta_2,\Delta_1)$.

% We first show packing under the additional conditions that $\sqrt{\Delta_1} \le \frac{1- \frac{4}{t}}{4 C_t}\Delta_2- \frac{6}{4 C_t}$.

%Suppose the theorem is not true. Consider a counterexample $(G_1,G_2)$ and a pair of labellings of $G_1$ and $G_2$ such that the degree $1$ graph induced by the purple edges has the least possible number of purple edges. Fix a purple edge $uv$.

% We may assume that $|A^*(u)|\ge \Delta_1-1 \ge 1$ and $|B^*(u)| \ge \Delta_2-1 \ge 1$, otherwise Claim~\ref{le:AB1} shows that $(\Delta_1+1)(\Delta_2+1) \le n+1 \le (\Delta_1+1)(\Delta_2+1)-1$, a contradiction.
%We have by Claim~\ref{le:AB1} that $|A^*(u)|,|B^*(u)|\ge1$.
We have by Claim~\ref{clm:links1} that
\begin{align*}
[n] &\subseteq N_1(N_2(u)) \cup A^*(u) \cup N_2(u), \\
[n] &\subseteq N_1(N_2(v))  \cup A^*(v) \cup N_2(v), \text{ and}\\
[n] &\subseteq N_2(N_1(v))  \cup B^*(v) \cup N_1(v).
\end{align*}
So it follows (also using the definitions of $A^*(v)$, $A(v)$,  $A^*(u)$,  $B^*(v)$, $A(u)$, $B(v)$) that
\begin{align}\label{eq:nbound}
n
&\le |N_1(N_2(u))|  + |A^*(u)| +  |N_2(u)|\nonumber\\
& \le (|N_1(N_2(u)) \cap N_1(N_2(v))| + |N_1(N_2(u)) \cap A^*(v)| + |N_1(N_2(u)) \cap N_2(v)|)+\nonumber\\
& \hspace{25pt}(|A^*(u) \cap N_2(N_1(v))| + |A^*(u) \cap B^*(v)| + |A^*(u) \cap N_1(v)|)   +   |N_2(u)|\nonumber\\
& \le (|N_1(N_2(u)) \cap N_1(N_2(v))| + | A(v)| + |N_1(v)|+ |N_2(v)|)+\nonumber\\
& \hspace{25pt}(|N_2(N_1(u)) \cap N_2(N_1(v))|  + |A(u) \cap B(v)|+ |N_1(u)| + |N_2(v)| + |N_1(v)|)    +    |N_2(u)|\nonumber\\
& \le |N_1(N_2(u)) \cap N_1(N_2(v))| +|N_2(N_1(u)) \cap N_2(N_1(v))| +| A(v)|+ |B(v)|+ 3 (\Delta_1 + \Delta_2)\nonumber\\
& \le 4 C_t  \Delta_1\sqrt{\Delta_1} + 4 \Delta_1\Delta_2/t+ 7(\Delta_1 + \Delta_2),
\end{align}
where to derive the last line we applied Claim~\ref{le:mixedNN} for some $t \ge 2$ to be specified later.
Routine arithmetic manipulations show that, if 
\begin{align}\label{cond1}
\sqrt{\Delta_1} < \frac{t-4}{4 t C_t}\Delta_2- \frac{3}{C_t} = \frac{1}{4 t C_t}((t-4)\Delta_2- 12t),
\end{align}
then~\eqref{eq:nbound} is strictly less than $(\Delta_1+1)(\Delta_2+1)-(1+6(\Delta_1-\Delta_2)) \le (\Delta_1+1)(\Delta_2+1)-1$, contradicting our assumption on $n$.
%If $\sqrt{\Delta_1} \le \frac{1- \frac{4}{t}}{4 C_t}\Delta_2- \frac{6}{4 C_t}$, then evaluating this in the bound above yields $n \le (\Delta_1+1) (\Delta_2+1) -(1+3\Delta_1-2 \Delta_2)$, which contradicts $(\Delta_1+1)(\Delta_2+1) \le n+1$. So we have shown that $G_1$ and $G_2$ pack under the conditions of this theorem and the additional condition
%%
%\begin{align}\label{cond1}
%\sqrt{\Delta_1} \le \frac{1- \frac{4}{t}}{4 C_t}\Delta_2- \frac{6}{4 C_t}.
%\end{align} 
Moreover, by Theorem~\ref{th:oldtheorem}, if 
\begin{align}\label{cond2}
\Delta_1 \ge 34\Delta_2,
\end{align}
then $G_1$ and $G_2$ pack, also a contradiction.
Thus neither of~\eqref{cond1} and~\eqref{cond2} holds, and so
\begin{align*}
\frac{136 t C_t}{t-4} \left( \sqrt{\Delta_1} + \frac{3}{C_t} \right) \ge 34 \Delta_2 > \Delta_1 \ge \frac{1}{16 t^2 C_t^2} \left( (t-4)\Delta_2- 12t \right)^2.
\end{align*}
This in turn yields the following two quadratic polynomial inequalities:
\begin{align*}
(t-4)^2 \Delta_2^2 - (544 t^2 C_t^2 + 24 t) \Delta_2 + 144 t^2 < 0 &\text{ and}\\
(t-4) \Delta_1 - 136 t C_t \sqrt{\Delta_1} - 408 t < 0 &.
\end{align*}
A good choice of $t$ turns out to be $t=15$. Substituting this (and the formula for $C_t$) into the above two inequalities yields that $\Delta_2 < 27620$ and $\Delta_1 < 940060$. This contradicts our assumptions on $\Delta_1$ and $\Delta_2$, and this completes the proof.
\qed

\bibliographystyle{abbrv}
\bibliography{packingplus}

\end{document}